
\documentclass[12pt,amstex]{amsart}

\usepackage{mathptmx}
\usepackage{mathrsfs}

\usepackage{verbatim}
\usepackage{url}
\usepackage[all]{xy}
\usepackage{color}

\usepackage[colorlinks=true,citecolor=blue]{hyperref}

\usepackage{stmaryrd}
\usepackage{epsfig}
\usepackage{amsmath}
\usepackage{amssymb}

\usepackage{amscd}
\usepackage{graphicx}
\usepackage{pstricks}

\topmargin=0pt \oddsidemargin=0pt \evensidemargin=0pt
\textwidth=15cm \textheight=22cm \raggedbottom

\theoremstyle{plain}
\newtheorem{thm}{Theorem}[section]

\newtheorem{prop}[thm]{Proposition}
\newtheorem{ques}[thm]{Question}
\newtheorem{cor}[thm]{Corollary}
\theoremstyle{definition}

\theoremstyle{remark}
\newtheorem{rem}[thm]{Remark}


\def \N {\mathbb N}

\def \Z {\mathbb Z}

\def \a {\alpha }
\def \b {\beta}
\def \ep {\epsilon}

\def \D {\Delta}

\begin{document}
\title{A non-PI minimal system is Li-Yorke sensitive}

\author{Song Shao}
\author{Xiangdong Ye}

\address{Wu Wen-Tsun Key Laboratory of Mathematics, USTC, Chinese Academy of Sciences and
Department of Mathematics, University of Science and Technology of China,
Hefei, Anhui, 230026, P.R. China.}

\email{songshao@ustc.edu.cn}
\email{yexd@ustc.edu.cn}

\subjclass[2010]{Primary: 37B05; 54H20}

\thanks{This research is supported by NNSF of China (11371339, 11431012, 11571335) and by ¡°the Fundamental Research Funds for the Central Universities¡±.}

\date{}

\begin{abstract}
It is shown that any non-PI minimal system is Li-Yorke sensitive. Consequently, any minimal system with nontrivial
weakly mixing factor (such a system is non-PI) is Li-Yorke sensitive, which answers affirmatively an open question by Akin and Kolyada in \cite{AK03}.
\end{abstract}

\maketitle





\section{Introduction}

\subsection{Topological dynamics}\
\medskip

First we recall some basic notions in topological dynamics.
{\it A topological dynamical system} $(X,T)$ is a compact metric space
$(X,\rho)$ endowed with a continuous surjective map $T: X\rightarrow X$.
For simplicity, we only consider homeomorphisms in this paper.

\medskip

Recall that $(X, T )$ is {\em transitive} if for each pair of opene (i.e. non-empty and
open) subsets $U$ and $V$, $N(U,V ) = \{n \in \Z : U \cap T^{-n}V\neq \emptyset\}$ is non-empty.
$(X, T)$ is (topologically) {\em weakly mixing} if $(X\times X, T\times T)$ is transitive.
A point $x\in X$ is a {\em transitive point} if its orbit $Orb (x,T)=\{T^nx: n\in \Z\}$ is
dense in $X$. If every point of $X$ is transitive, we say $(X,T)$ is {\em minimal}. A point is minimal if its orbit closure is a minimal subsystem.

\medskip

A pair $(x,y)\in X\times X$ is said to be {\em proximal} if $\liminf_{n\to \infty} \rho(T^nx,T^ny)=0$, and it is called {\em asymptotic} when $\lim_{n \to \infty} \rho(T^nx,T^ny)=0$. The set of proximal pairs is denoted by $P(X,T)$ or $P$ when the system is clear. $P$ is a reflexive, symmetric, $T$-invariant relation but in general is not transitive or closed. For $x\in X$, the set $P[x]=\{y\in X: (x,y)\in P\}$ is called the {\em proximal cell} of $x$.
An important result concerning the proximal cell is that for any $x\in X$, $P[x]$ contains a minimal point; more precisely,
every minimal subset of $\overline{Orb(x, T)}$ meets $P[x]$ \cite[Theorem 5.3.]{Au88}. It follows immediately that
if $P[x]$ is a singleton, then $x$ is a minimal point and
in that case $x$ is called a {\em distal} point. A system is called a {\it distal system} when every point is
distal. A system $(X, T )$ with some distal point $x$ whose orbit $Orb(x)$ is dense in $X$ is called a
{\em point-distal system}.
For a weakly mixing system, the proximal cell $P[x]$ is ``big'', in the sense that
it is a residual subset of $X$ for all $x\in X$ \cite{AK03}. In fact, for mixing systems, proximal cells can be very complicated \cite{HSY04}.

\medskip

For a topological dynamical system $(X,T)$, a pair is said to be a {\em Li-Yorke pair} if it
is proximal but not asymptotic. $x\in X$ is {\it recurrent} if there is a subsequently $\{n_i\}$ of $\Z$
with $n_i\to \infty$ such that $T^{n_i}x\to x$.
A pair $(x,y) \in X^2 \setminus
\Delta_X $ is said to be a {\em strong Li-Yorke pair} if it is
proximal and is also a recurrent point of $X^2$, where $\Delta_X=\{(x,x):x\in X\}$.
A subset $A \subset X$ is called {\em scrambled}
({\em resp. strongly scrambled}) if every pair
of distinct points in $A$ is Li-Yorke (resp. strong Li-Yorke).
The system $(X,T)$
is said to be {\em Li-Yorke chaotic} ({\em resp. strong Li-Yorke} chaotic)
if it contains an uncountable scrambled (resp. strongly scrambled) subset.

\medskip

Let $(X,T)$ be a dynamical system. A subset $K$ of $X$ is {\em uniformly recurrent} if  for every $\ep>0$
there is an $n \in \N$ with $d(T^n x, x) < \ep$ for all $x$ in $K$.
$K$ is {\em recurrent}  if
every finite subset of $ K$ is  uniformly recurrent.
The subset $K$ is called {\em uniformly  proximal} if
for every $\ep>0$ there is
$n \in \N$ with ${\rm diam}\, T^n K < \ep$.
A subset $K$ of $X$ is called {\em proximal} if every finite subset of $K$ is
uniformly  proximal.

\medskip

A {\em homomorphism} $\pi : (X,T) \rightarrow (Y,S)$ is a
continuous surjective map from $X$ to $Y$ such that $S \circ \pi=\pi
\circ T$. In this case we say that
$(X,T)$ an {\em extension} of $(Y,S)$ and that $(Y,S)$ is  a {\em
factor} of $(X,T)$. An extension $\pi$ is determined by the
corresponding closed invariant equivalence relation $R_{\pi} = \{
(x_1,x_2): \pi x_1= \pi x_2 \} =(\pi \times \pi )^{-1} \Delta_Y
\subset  X \times X$.

\subsection{Li-Yorke sensitivity}\

\medskip

$(X,T)$ is {\em Li-Yorke sensitive}, briefly LYS or LYS$_\ep$, if there is an $\ep>0$ with the property that every $x\in X$ is a limit of points $y\in X$
such that the pair $(x,y)$ is proximal but not $\ep$-asymptotic, i.e., if
\begin{equation*}
  \liminf_{n\to\infty} \rho(T^nx,T^ny)=0, \text{and }\  \limsup_{n\to \infty}\rho(T^nx,T^ny)>\ep.
\end{equation*}
Each pair satisfying above condition is called an {\em $\ep$-Li-Yorke pair}. A set $S\subseteq X$ is called an
{\em $\ep$-scrambled set} if each pair with distinct elements is an $\ep$-Li-Yorke pair.
$(X,T)$ is {\em $\ep$-Li-Yorke chaotic}, briefly LYC$_\ep$ if it has an
uncountable $\ep$-scrambled subset, for some $\ep>0$.

\medskip

Li-Yorke Chaos \cite{LY75} and sensitivity \cite{Gu, GW93} are two basic notions to describe the complexity of a topological dynamics.
The notion of Li-Yorke sensitivity, combining the above two notions together, was introduced and studied by Akin and Kolyada \cite{AK03}. In \cite{AK03} the authors showed
that every nontrivial weak mixing system is LYS, and they stated five conjectures concerning LYS. Three of
them were disproved in \cite{CM06} and \cite{CM09}. In particular, it was
proved that a minimal LYS system needs not to have a nontrivial weak
mixing factor \cite{CM06}, and that a minimal system with a nontrivial LYS factor
needs not to be LYS \cite{CM09}.
The remaining two open problems are the following:

\begin{ques}\cite[Question 4.]{AK03}\label{Q1}
Is every minimal system with a nontrivial weak mixing factor
LYS?
\end{ques}

\begin{ques}\cite[Question 2.]{AK03}\label{Q2}
Does Li-Yorke sensitivity imply Li-Yorke chaos?
\end{ques}

In this paper we give an affirmative answer to Question \ref{Q1}. In fact, we show the following stronger result
(for the definition of PI, see the next subsection):

\medskip

\noindent {\bf Main Theorem.} \
{\em
Let $(X,T)$ be a minimal system. If
$(X,T)$ is not PI, then there is some $\ep>0$  such that for any $x\in X$ and any neighbourhood $U$ of $x$, there is a subset $S\subseteq U$ such that
\begin{enumerate}
  \item $S$ is uncountable and $x\in S \subseteq P[x]\cap U$;
  \item $S$ is $\ep$-scrambled;
  \item $S$ is proximal;
  \item $S$ is recurrent.
\end{enumerate}

In particular, $(X,T)$ is $\ep$-Li-Yorke chaotic, strongly Li-Yorke chaotic and Li-Yorke sensitive.
}

\medskip

It is known that a factor of a minimal PI system is PI (see Lemma \ref{PI-factor}). Using this fact it is easy to show that
every minimal system with a nontrivial weak mixing factor is not a PI system. So a minimal system with a non-trivial weakly
mixing factor is LYS, answering Question \ref{Q1} affirmatively. Moreover, by the Main Theorem,
we have that any non-PI system is (strongly) Li-Yorke chaotic, which was proved first in \cite{AGHSY}.
Note that here we offer a different approach.

\medskip

For more details related to Li-Yorke sensitivity, see \cite{AK03, CM06, CM09, CM16}.

\subsection{On the structure of minimal systems}\
\medskip

Our main tool to show the Main Theorem is the structure theorem of minimal systems. In this subsection
we state the structure theorem for minimal systems and give the definition of a PI system. For other undefined notions, see \cite{Au88, G76, V77}.

\medskip

We first recall definitions of extensions.
An extension $\pi :
(X,T) \rightarrow (Y,S)$ is called {\em proximal} if $R_{\pi}
\subset P(X,T)$. 
$\pi$ is an {\em equicontinuous} or {\em almost periodic}
extension if for every $\epsilon >0$ there is
$\delta >0$ such that $(x,y) \in R_{\pi}$ and $\rho (x,y)<\delta$ imply $\rho(T^nx,T^ny)<\epsilon$,
for every $n \in \Z$. In the metric case an equicontinuous extension is also called an
{\em isometric extension}.
The extension $\pi$ is a {\em weakly
mixing extension} when $(R_\pi, T\times T)$ as a subsystem of the product
system $(X\times X, T\times T)$ is transitive.
$\pi$ is called a {\em relatively incontractible (RIC) extension}\ if it is open and for every $n \ge 1$
the minimal points are dense in the relation
$$
R^n_\pi = \{(x_1,\dots,x_n) \in X^n : \pi(x_i)=\pi(x_j),\ \forall \ 1\le i
\le j \le n\}.
$$


\medskip
Note that $R_\pi^1=X$ and $R^2_\pi=R_\pi$.
We say that a minimal system $(X,T)$ is a
{\em strictly PI system} (PI means proximal-isometric) if there is an ordinal $\eta$
(which is countable when $X$ is metrizable)
and a family of systems
$\{(W_\iota,w_\iota)\}_{\iota\le\eta}$
such that

\begin{enumerate}
 \item $W_0$ is the trivial system,
  \item for every $\iota<\eta$ there exists a homomorphism
$\phi_\iota:W_{\iota+1}\to W_\iota$ which is
either proximal or equicontinuous
(isometric when $X$ is metrizable),
  \item for a
limit ordinal $\nu\le\eta$ the system $W_\nu$
is the inverse limit of the systems
$\{W_\iota\}_{\iota<\nu}$,
  \item $W_\eta=X$.
\end{enumerate}

We say that $(X,T)$ is a {\em PI-system} if there
exists a strictly PI system $\tilde X$ and a
proximal homomorphism $\theta:\tilde X\to X$.


\medskip

Finally we have the structure theorem for minimal systems
(see Ellis-Glasner-Shapiro \cite{EGS},
McMahon \cite{Mc76}, Veech \cite{V77}, and
Glasner \cite{G76}).

\begin{thm}[Structure theorem for minimal systems]\label{structure}
Let $(X,T)$ be a minimal system. Then we have the following diagram:

$$
\xymatrix{
  X          & X_\infty \ar[d]^{\pi_\infty} \ar[l]^{\theta} \\
                & Y_\infty             }
$$
\medskip

\noindent where $X_\infty$ is a proximal extension of $X$ and a RIC
weakly mixing extension of the strictly PI-system $Y_\infty$.

The homomorphism $\pi_\infty$ is an isomorphism (so that
$X_\infty=Y_\infty$) if and only if  $X$ is a PI-system.
\end{thm}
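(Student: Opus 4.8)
The plan is to prove this through the Ellis-semigroup Galois theory of minimal flows, constructing the tower $\{W_\iota\}$ by transfinite induction. I would fix the universal minimal system $(M,T)$, a minimal idempotent $u \in M$, and the Ellis group $G = uM$ equipped with the $\tau$-topology, under which factors of a pointed minimal system correspond to $\tau$-closed subgroups of $G$; this is the language in which each step below is cleanest. The first construction is the proximal extension $\theta : X_\infty \to X$ obtained by \emph{RIC-ification}: every minimal system admits a proximal extension whose map onto the trivial system is RIC. Concretely, letting $A \le G$ be the $\tau$-closed Ellis group of $(X,x_0)$, one realizes $X_\infty$ as the minimal flow with group $A$ but presented so that $X_\infty \to \{\ast\}$ is open with minimal points dense in every $R^n$; the canonical map $\theta$ onto $X$ is then proximal since a proximal extension preserves the Ellis group. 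This $X_\infty$ sits at the top of the diagram and never changes during the induction.

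The core ingredient, which I expect to be the main obstacle, is the dichotomy (the relative Furstenberg--Veech structure lemma): if $\pi : X_\infty \to W$ is a RIC extension that is \emph{not} weakly mixing, then $R_\pi$ fails to be transitive, and there is a nontrivial intermediate factor $X_\infty \to W' \to W$ with $W' \to W$ isometric. Proving this is the heart of the matter. One analyzes the relative regionally-proximal relation inside $R_\pi$, uses RIC-ness (openness together with density of minimal points in $R_\pi^n$) to manufacture a nontrivial almost periodic relation over $W$, and extracts from it the maximal isometric factor $W'$ of $\pi$. All of the $\tau$-topology machinery on $G$, together with the theory of quasifactors and the algebra of the enveloping semigroup, is deployed here.

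With the dichotomy in hand I would run the transfinite induction. Set $W_0 = \{\ast\}$, so that $X_\infty \to W_0$ is RIC by the construction above. Given a RIC extension $X_\infty \to W_\iota$: if it is weakly mixing, stop and put $Y_\infty = W_\iota$ and $\eta = \iota$; otherwise the dichotomy produces a nontrivial isometric extension $W_{\iota+1} \to W_\iota$, and I insert a proximal extension (a further RIC-ification) where needed so that $X_\infty \to W_{\iota+1}$ is again RIC. This is precisely why the tower alternates proximal and isometric steps, and why $Y_\infty$ is \emph{strictly} PI. At limit ordinals one takes inverse limits and checks that RIC-ness and the PI structure pass to inverse limits. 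Each successor step properly refines the base, giving a strictly decreasing chain of $\tau$-closed subgroups of $G$ between $A$ and $G$; since these form a set, the process terminates at some ordinal $\eta$ (countable when $X$ is metrizable), yielding $Y_\infty$ strictly PI with $\pi_\infty : X_\infty \to Y_\infty$ a RIC weakly mixing extension.

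Finally I turn to the equivalence. If $\pi_\infty$ is an isomorphism, then $X_\infty = Y_\infty$ is strictly PI and $\theta$ is proximal, so $X$ is PI directly from the definition. Conversely, if $X$ is PI, then using that PI is preserved under factors and under the constructions above, $\pi_\infty$ is a factor map sitting inside a proximal--isometric structure; but a nontrivial weakly mixing extension cannot occur inside such a structure, so $\pi_\infty$ must be trivial, forcing $X_\infty = Y_\infty$. The one genuinely deep step is the dichotomy of the second paragraph; once that lemma is available, the induction and the final equivalence reduce to bookkeeping over the Galois correspondence for $\tau$-closed subgroups of $G$.
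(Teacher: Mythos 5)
The first thing to note is that the paper does not prove Theorem \ref{structure} at all: it is quoted as a classical result with pointers to Ellis--Glasner--Shapiro, McMahon, Veech and Glasner, so the only benchmark is the proof in those references. Your outline does follow exactly that route (Ellis group $G=uM$ with the $\tau$-topology, RIC-ification via shadow diagrams, the weak-mixing/isometric dichotomy, transfinite induction). But judged as a proof it has genuine gaps, because everything that makes the theorem hard is deferred. Both pillars --- the shadow-diagram construction producing RIC extensions, and above all the dichotomy that a RIC extension which is not weakly mixing admits a nontrivial intermediate isometric extension --- are only \emph{named}, with a list of tools that ``would be deployed.'' That dichotomy is essentially the entire mathematical content of the theorem (it occupies the bulk of the Ellis--Glasner--Shapiro paper); once it is assumed, the rest really is bookkeeping, so assuming it is tantamount to assuming the theorem. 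The same applies to the converse half of the final equivalence: ``a nontrivial weakly mixing extension cannot occur inside such a structure'' is precisely a statement that needs proof (e.g., via the group-theoretic characterization of PI, $G_\infty\subseteq A$), not a consequence of the definitions.

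There are also two concrete missteps in the plan itself. First, you claim $X_\infty$ is obtained by a single RIC-ification at the outset and ``never changes during the induction,'' yet your own successor step then ``inserts a proximal extension (a further RIC-ification) where needed'' --- these are contradictory, and the second is what actually happens: in the canonical PI tower, after the base $W_\iota$ is replaced by the isometric extension $W_{\iota+1}$, the map from the current top to $W_{\iota+1}$ is in general no longer RIC, so the top must be proximally extended at \emph{every} successor stage; $X_\infty$ is then the inverse limit of these tops, and one must verify that $\theta$ remains proximal and that $\pi_\infty$ remains RIC and weakly mixing after passing to the limit --- genuine verifications your plan omits. Second, ``every minimal system admits a proximal extension whose map onto the trivial system is RIC'' is not what the shadow diagram gives: it gives a RIC extension over a minimal \emph{proximal} flow, and concluding that this base is a point uses that minimal proximal $\mathbb{Z}$-flows are trivial (strong amenability of abelian groups); this is fine in the paper's setting but must be said. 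Relatedly, the Galois correspondence you invoke is not a bijection between factors and $\tau$-closed subgroups (proximal extensions share the same Ellis group), and your termination argument needs the additional lemma that a nontrivial isometric extension strictly decreases the Ellis group --- again a fact requiring proof, not bookkeeping.
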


\begin{rem}\label{rem-wm}
If $(X,T)$ is a weakly mixing minimal system, then in the structure theorem, $X=X_\infty$ and $Y_\infty$ is trivial. Hence if a minimal system is both PI and weakly mixing, then it is trivial.
\end{rem}

\section{Proof of Main Theorem}

In this section we will give the proofs of the main results of the paper.
To this aim, we need some basic results from the theory of minimal flows.

\medskip

First recall some basic notions related to the  Ellis semigroup.
Given a system $(X,T)$ its {\em enveloping semigroup} or {\em Ellis
semigroup} $E(X,T)$ is defined as the closure of the set $\{T^n: n\in
\Z\}$ in $X^X$ (with its compact, usually non-metrizable, pointwise
convergence topology).
Let $(X,T),(Y,S)$ be systems and $\pi: X\rightarrow Y$ be an
extension. Then there is a unique continuous semigroup homomorphism
$\pi^* : E(X,T)\rightarrow E(Y,S)$ such that
$\pi(px)=\pi^*(p)\pi(x)$ for all $x\in X,p\in E(X,T)$. When there
is no confusion, we usually regard that the enveloping semigroup of $X$
acts on $Y$: $p\pi(x)=\pi(px)$ for $x\in X$ and $p\in E(X,T)$.

For a semigroup the element $u$ with $u^2=u$ is called an {\em
idempotent}. The well known Ellis-Numakura theorem states that for any enveloping
semigroup $E$ the set $J(E)$ of idempotents of $E$ is not empty.
An idempotent $u \in J(E)$ is {\em minimal} if $v \in
J(E)$ and $vu=v$ implies $uv=u$.
A point $x\in X$ is minimal if and only if $ux=x$ for some minimal idempotent $u\in E(X,T)$.

\medskip
For $n\in \N$, let
$T^{(n)}=T\times T\times \ldots \times T$ ($n$ times). For a subset $A\subseteq X^n$, let
$$Orb (A, T^{(n)})=\left\{\big(T^{(n)}\big)^kA: k\in \Z\right\}.$$

The following theorem is crucial to the proof of the Main Theorem.
Note that we assume that $X$ and $Y$ are metrizable.

\begin{thm}\cite[Lemma B.2 and Theorem B.3]{SY} \label{thm-SY}
Let $(X,T)$ and $(Y,T)$ be minimal systems and let $\pi: X
\rightarrow Y$ be a RIC weakly mixing extension. Let $y\in Y$
with $uy=y$, where $u$ is a minimal idempotent. Then for all $n\ge 2$, any
nonempty open subset $U$ of $\overline{u\pi^{-1}(y)}$ and any
transitive point $x'=(x_1', \cdots, x_{n-1}')\in R^{n-1}_\pi$ with
$\pi(x_j')=y, j=1,\cdots, n-1$, one has that
$$\overline{Orb(\{x'\}\times U, T^{(n)})}=R^n_\pi.$$

Moreover, for each transitive point $x'=(x_1', \cdots, x_{n-1}')\in R^{n-1}_\pi$ with $\pi(x_j')=y, j=1,\cdots, n-1$, there is some
residual subset $D$ of $\overline{u\pi^{-1}(y)}$ such that if
$x''\in D$, then \break $\overline{Orb((x',x''),T^{(n)})}=R^n_{\pi}$.
\end{thm}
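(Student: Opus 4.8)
The plan is to prove the first (orbit-closure) assertion and then obtain the ``moreover'' part from it by a Baire category argument. Throughout write $E=E(X,T)$, and recall that evaluation at a fixed point is continuous, so for any $z\in X^m$ one has $\overline{Orb(z,T^{(m)})}=E\cdot z=\{(pz_1,\dots,pz_m):p\in E\}$. In particular, since $x'$ is a transitive point of $R^{n-1}_\pi$, we have $E\cdot x'=R^{n-1}_\pi$, and
\[
\overline{Orb(\{x'\}\times U,T^{(n)})}=\overline{\{(px',pw):p\in E,\ w\in U\}}.
\]
The inclusion ``$\subseteq$'' is immediate: every $w\in U\subseteq\overline{u\pi^{-1}(y)}$ satisfies $\pi(w)=uy=y=\pi(x_j')$, so $\{x'\}\times U\subseteq R^n_\pi$, and $R^n_\pi$ is closed and $T^{(n)}$-invariant. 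The whole difficulty lies in the reverse inclusion.

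For ``$\supseteq$'' I would argue by induction on $n\ge 2$, with the weak mixing of $\pi$ as the engine. Since the minimal points are dense in $R^n_\pi$ (the RIC hypothesis), it suffices to approximate an arbitrary minimal point $(z_1,\dots,z_n)\in R^n_\pi$; I choose a minimal idempotent $v$ with $vz_i=z_i$ for all $i$ and put $y_0=\pi(z_1)$, so that $vy_0=y_0$. Using $E\cdot x'=R^{n-1}_\pi$ and minimality of $(z_1,\dots,z_{n-1})$, I pick $p$ in a minimal left ideal containing $v$ with $px'=(z_1,\dots,z_{n-1})$ and $vp=p$. The task then reduces to the fibre statement
\[
\overline{\{p'w:\ p'\in E,\ p'x'=(z_1,\dots,z_{n-1}),\ w\in U\}}=\pi^{-1}(y_0),
\]
that is, that the last coordinate sweeps out the whole fibre while the first $n-1$ coordinates are pinned at a prescribed minimal point. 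For $n=2$ this is the statement that the weakly mixing extension $\pi$ ``fills'' the fibre square, extracted from transitivity of $R_\pi$ together with the openness of $\pi$; for the inductive step one feeds the $(n-1)$-coordinate version into the same fibre argument, using that $\pi$, and the fibre products attached to it, remain RIC and weakly mixing.

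Granting the orbit-closure assertion, the ``moreover'' part is a routine category argument. Fixing a countable base $\{V_k\}_{k\ge1}$ of opene subsets of $R^n_\pi$, I set
\[
D_k=\{x''\in\overline{u\pi^{-1}(y)}:\ \exists\, m\in\Z,\ (T^{(n)})^m(x',x'')\in V_k\}.
\]
Each $D_k$ is open, being a union over $m$ of preimages of $V_k$ under continuous maps. It is dense: for any opene $W\subseteq\overline{u\pi^{-1}(y)}$ the already-proved first part gives $\overline{Orb(\{x'\}\times W,T^{(n)})}=R^n_\pi\supseteq V_k$, so some $w\in W$ has its $(x',\cdot)$-orbit meeting $V_k$, i.e. $w\in D_k\cap W$. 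As $\overline{u\pi^{-1}(y)}$ is compact metric, hence Baire, $D=\bigcap_k D_k$ is a dense $G_\delta$, and for $x''\in D$ the orbit of $(x',x'')$ meets every $V_k$, whence $\overline{Orb((x',x''),T^{(n)})}=R^n_\pi$.

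The main obstacle is the fibre-filling statement of the second paragraph: converting ``weak mixing of the extension'', i.e. mere transitivity of $R_\pi$, into the strong conclusion that, with the base coordinates frozen at a minimal point, the remaining coordinate is dense in the whole fibre. This is where I expect soft transitivity to be insufficient and where one must invoke the structure theory of RIC weakly mixing extensions — the Ellis group with its $\tau$-topology, and the fact that for a weakly mixing extension the relative regionally proximal relation is all of $R_\pi$. It is also precisely the step that forces $\pi$ to be RIC, both so that the minimal points are dense in every $R^n_\pi$ (legitimising the reduction to minimal targets) and so that $\pi$ is open.
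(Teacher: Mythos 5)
Your peripheral steps are correct: the identification $\overline{Orb(z,T^{(m)})}=E\cdot z$, the easy inclusion $\overline{Orb(\{x'\}\times U,T^{(n)})}\subseteq R^n_\pi$, and the Baire-category derivation of the ``moreover'' clause from the first assertion (open dense sets $D_k$ indexed by a countable base of $R^n_\pi$, intersected inside the compact metric, hence Baire, space $\overline{u\pi^{-1}(y)}$) are all sound. But the actual content of the theorem --- the inclusion $R^n_\pi\subseteq\overline{Orb(\{x'\}\times U,T^{(n)})}$ --- is never proved. Your reduction to minimal points of $R^n_\pi$ is legitimate (RIC gives their density), but you then pass to a ``fibre statement'' in which the first $n-1$ coordinates are pinned \emph{exactly} at $(z_1,\dots,z_{n-1})$ via elements $p'\in E$ with $p'x'=(z_1,\dots,z_{n-1})$, and you claim that for $n=2$ this is ``extracted from transitivity of $R_\pi$ together with the openness of $\pi$''. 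That is exactly the step that does not follow softly: transitivity of $R_\pi$ yields a residual set of transitive points somewhere in $R_\pi$, but says nothing about pairs $(x',w)$ with both coordinates in one prescribed fibre, let alone with $w$ confined to an arbitrary opene subset $U$ of $\overline{u\pi^{-1}(y)}$. Worse, your pinned fibre statement is if anything \emph{stronger} than what is needed: a point of $\overline{Orb(\{x'\}\times U,T^{(n)})}$ lying over $(z_1,\dots,z_{n-1})$ arises as a limit of orbit points whose first $n-1$ coordinates merely converge to, and need never equal, that base point; requiring an exact representation $(p'x',p'w)$ is an extra constraint your argument gives no means to satisfy. So the reduction does not lower the difficulty, and the engine of the proof is absent --- as you yourself concede in the final paragraph, where you name the missing machinery ($\tau$-topology, Ellis group, relative regional proximality) without supplying any of it.

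For comparison: this paper does not prove Theorem \ref{thm-SY} at all; it is imported from \cite[Lemma B.2 and Theorem B.3]{SY}, and the remark following it notes that the proof requires Glasner's ``Ellis trick'' from \cite{G76} (hyperspace/quasi-factor arguments), which is also the machinery behind Theorems 14.27 and 14.28 of \cite{Au88} --- precisely the $n=2$ case you treat as immediate. In short, your proposal reproduces the theorem's easy shell plus the standard residual-set argument, but the core claim remains an unproved (and internally acknowledged) gap.
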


\begin{rem} We have the following
\begin{enumerate}
  \item To prove Theorem \ref{thm-SY} one needs the so-called Ellis trick by Glasner in \cite{G76}.
  We refer to \cite{Gl05} for more discussions about weakly mixing extensions.
  \item In Theorem \ref{thm-SY}, when $n=2$, $R^{n-1}_\pi=R^1_\pi=X$, i.e. for each $x\in X$ with $\pi(x)=y$ there is some residual subset $D$ of $\overline{u\pi^{-1}(y)}$ such that if
$x'\in D$, $\overline{Orb((x,x'),T\times T)}=R_{\pi}$. This result is Theorems 14.27 and 14.28 in \cite{Au88}.
\end{enumerate}
\end{rem}

\begin{cor}\label{cor-perfect}
Let $(X,T)$ and $(Y,T)$ be minimal  systems and let $\pi: X
\rightarrow Y$ be a RIC weakly mixing extension. Let $y\in Y$
with $uy=y$, where $u$ is a minimal idempotent. If $\pi$ is non-trivial, then  $\overline{u\pi^{-1}(y)}$ is perfect, and hence it has the cardinality of the continuum.
\end{cor}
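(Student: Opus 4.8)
The plan is to show that $M := \overline{u\pi^{-1}(y)}$ is a nonempty perfect set (i.e., closed with no isolated points) and then invoke the classical fact that a nonempty perfect subset of a complete metric space has cardinality at least that of the continuum. Since $M$ is a closed subset of the compact metric space $X$, it is itself compact, hence complete, and of cardinality at most $\mathfrak{c}$; this forces $|M|=\mathfrak{c}$. Nonemptiness is immediate because $\pi$ is surjective, so $\pi^{-1}(y)\neq\emptyset$ and thus $u\pi^{-1}(y)\neq\emptyset$. Moreover $M\subseteq\pi^{-1}(y)$, since $\pi(ux)=u\pi(x)=uy=y$ for every $x\in\pi^{-1}(y)$ and $\pi^{-1}(y)$ is closed. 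Being a closure, $M$ is automatically closed, so the whole problem reduces to showing that $M$ has no isolated points.

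To rule out isolated points I would argue by contradiction using the first assertion of Theorem \ref{thm-SY} with $n=2$ (so that $R^{1}_\pi=X$ and $R^{2}_\pi=R_\pi$). Suppose $z_0\in M$ is isolated; then $U:=\{z_0\}$ is a nonempty open subset of $M$. Since $X$ is minimal, every point of $X$ is transitive, and $\pi(z_0)=y$ by the observation above, so $z_0$ itself is an admissible choice of the transitive base point $x'$ in Theorem \ref{thm-SY}. Taking $x'=z_0$ and this $U$, the theorem yields
$$\overline{Orb(\{z_0\}\times\{z_0\}, T^{(2)})}=\overline{Orb((z_0,z_0), T\times T)}=R_\pi.$$

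The contradiction now comes from the diagonal: the orbit of $(z_0,z_0)$ satisfies $(T\times T)^k(z_0,z_0)=(T^k z_0, T^k z_0)\in\Delta_X$ for every $k$, so $Orb((z_0,z_0),T\times T)\subseteq\Delta_X$, and since $\Delta_X$ is closed its orbit closure also lies in $\Delta_X$. Hence $R_\pi\subseteq\Delta_X$; as $\Delta_X\subseteq R_\pi$ always holds, we get $R_\pi=\Delta_X$, i.e. $\pi$ is injective and therefore trivial, contradicting the hypothesis that $\pi$ is non-trivial. Thus no point of $M$ is isolated, $M$ is perfect, and the cardinality conclusion follows. The real content of the statement is packaged inside Theorem \ref{thm-SY}; the only delicate points in this deduction are checking that $z_0$ qualifies as the transitive base point (which minimality supplies for free) and the harmless but essential observation that feeding a diagonal point into the weak-mixing orbit-closure identity collapses $R_\pi$ to $\Delta_X$.
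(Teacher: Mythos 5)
Your proof is correct and is essentially the paper's own argument: assume an isolated point $z_0$ of $\overline{u\pi^{-1}(y)}$, apply Theorem \ref{thm-SY} with $n=2$, $U=\{z_0\}$ and base point $z_0$, and conclude $R_\pi=\Delta_X$, contradicting non-triviality of $\pi$. You merely spell out details the paper leaves implicit (that $z_0$ qualifies as the transitive base point, and the perfect-set cardinality argument), which is fine.
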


\begin{proof}
If $\overline{u\pi^{-1}(y)}$ is not perfect, then there is some $x\in \overline{u\pi^{-1}(y)}$ such that $\{x\}$ is relatively open in $\overline{u\pi^{-1}(y)}$. Take $n=2$ and $U=\{x\}$ in Theorem \ref{thm-SY}, then we have
$$\D_X=\overline{Orb(\{x\}\times \{x\}, T\times T)}=R_\pi.$$
Thus $\pi$ is trivial, a contradiction! The proof is completed.
\end{proof}

The following proposition will be used in the proof of the Main Theorem.

\begin{prop}\cite[Corollary 7.4]{EGS}\label{PI-factor} 
A factor of a PI flow is PI.
\end{prop}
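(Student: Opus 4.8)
The plan is to reduce at once to the case of a \emph{strictly} PI flow. Since $(X,T)$ is PI, by definition there are a strictly PI flow $\widetilde X$ and a proximal homomorphism $\theta\colon\widetilde X\to X$. If $\pi\colon X\to Z$ is the given factor map, then $\pi\circ\theta\colon\widetilde X\to Z$ is again a surjective homomorphism, so $Z$ is a factor of the strictly PI flow $\widetilde X$. Thus it suffices to prove that every factor of a strictly PI flow is PI. Along the way it is worth recording one easy closure property in the ``upward'' direction: a proximal extension of a strictly PI flow is again strictly PI, since one simply appends the new proximal step to the top of the defining tower $W_0\to\cdots\to W_\eta$. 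The genuine content, by contrast, lies in the ``downward'' (factor) direction.

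The natural attempt is a transfinite induction along the tower $W_0\to\cdots\to W_\eta=\widetilde X$, transporting it to a tower over $Z$ inside the lattice of factors, e.g.\ by forming the meets $Z\wedge W_\iota$, which interpolate between the trivial system and $Z$. Here is the crux of the difficulty: the classes of proximal and of isometric extensions are preserved under \emph{base change} (pullback along a factor map preserves each type, and this also handles the joins $Z\vee W_\iota$), but they are \emph{not} preserved under passing to factors or meets --- the step $Z\wedge W_{\iota+1}\to Z\wedge W_\iota$ need not inherit the type of $W_{\iota+1}\to W_\iota$. Equivalently, what is really needed is the \emph{division property} for PI extensions: if a composite $X\to Z\to\{\ast\}$ is PI, then the second map $Z\to\{\ast\}$ is PI. This is not a formal consequence of the defining operations, and it is the technical heart of the statement.

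The route that succeeds --- the one underlying \cite{EGS} --- is algebraic. Fix a universal minimal flow $M$ with a minimal idempotent $u$ and let $G=uM$, equipped with the $\tau$-topology; each minimal flow $Z$ corresponds to a $\tau$-closed subgroup $\mathcal G(Z)\le G$, and $Z$ is a factor of $X$ precisely when $\mathcal G(X)\le\mathcal G(Z)$. The key input is a characterization of the PI property by a condition on $\mathcal G(Z)$ that is \emph{monotone} under enlarging the subgroup --- concretely, that $\mathcal G(Z)$ contain a fixed $\tau$-closed normal subgroup of $G$, namely the group of the universal PI flow. Granting this, the conclusion is immediate: for a factor $Z$ of the PI flow $X$ one has $\mathcal G(Z)\supseteq\mathcal G(X)$, so $\mathcal G(Z)$ still satisfies the condition and $Z$ is PI. The main obstacle in any approach is therefore exactly this characterization: setting up the $\tau$-topology and proving that the entire PI tower of Theorem \ref{structure} is detected by such an inclusion-monotone group-theoretic condition. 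Once that is in place, no further lifting or delicate handling of limit ordinals is needed, whereas the purely dynamical attempt founders precisely because the relative notions behave well only under base change and not under factorization.
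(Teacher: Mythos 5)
First, note that the paper does not prove Proposition \ref{PI-factor} at all: it is stated with a citation to \cite[Corollary 7.4]{EGS}, so the only meaningful comparison is with the argument in that source. Your outline does reconstruct the essential shape of that argument. The reduction to strictly PI flows is correct (though it becomes superfluous once the algebraic machinery enters), your diagnosis of why the naive approach fails is exactly right --- proximal and isometric extensions are stable under base change but not under passing to factors or meets, so one cannot push the tower of $\widetilde X$ down onto $Z$ --- and the resolution in \cite{EGS} is indeed the one you describe: one shows that a minimal flow is PI if and only if its Ellis group contains $G_\infty$, the $\tau$-closed normal subgroup of $G=uM$ obtained as the stable value of a transfinite derived series, and the factor statement then follows from monotonicity of Ellis groups under factor maps, exactly as in your final paragraph.

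Two caveats. First, your assertion that ``$Z$ is a factor of $X$ precisely when $\mathcal{G}(X)\le\mathcal{G}(Z)$'' is false as a biconditional: the Ellis group is not a complete invariant of a minimal flow. For instance, a Sturmian system $\widetilde X$ is a nontrivial almost one-to-one, hence proximal, extension of an irrational rotation $X$, so with compatible base points $\mathcal{G}(\widetilde X)=\mathcal{G}(X)$; yet $\widetilde X$ is not a factor of $X$, since every factor of an equicontinuous flow is equicontinuous and $\widetilde X$ is not. Only the implication ``factor $\Rightarrow$ group containment'' holds, and fortunately that is the only direction your argument uses, so the slip is harmless here. Second, what you submit is an outline rather than a proof: the characterization you call ``the key input'' is precisely the content of the PI structure theory developed in \cite{EGS}, and you invoke it without proof. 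You are candid that this is where the entire difficulty lives, and deferring to \cite{EGS} at that point is no worse than what the paper itself does; but as a self-contained argument your proposal has a gap exactly there.
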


Now we are ready  to give the proof of the Main Theorem.

\begin{proof}[Proof of the Main Theorem]
By the structure theorem for minimal systems we have the following diagram:
$$
\xymatrix{
  X          & X_\infty \ar[d]^{\pi_\infty} \ar[l]^{\theta} \\
                & Y_\infty             }
$$
\medskip

\noindent where $\theta$ is a proximal extension, $\pi_\infty$ is a non-trivial weakly mixing RIC extension and $Y_\infty$ is a strictly PI-system. Let
$$d=\max \{\rho(\theta( x_1),\theta( x_2)): (x_1,x_2)\in R_{\pi_\infty}\}.$$
Then $d>0$, since if $d=0$ then  $R_{\pi_\infty}\subseteq R_\theta$, which implies that $X$ is a factor of $Y_\infty$,
a contradiction by Proposition \ref{PI-factor}.  Put $\ep= \frac12 d>0$.

\medskip

Fix $x\in X$ and choose $x'_1\in \theta^{-1}(x)$. Let $ux'_1=x'_1$ for some minimal idempotent $u$ and $U'=\theta^{-1}(U)$, where
$U$ is a nonempty open subset of $\overline{u\pi^{-1}(y)}$ with $y=\pi_\infty(x'_1)$. Then $U'$ is a neighborhood of $x'_1$.
We are going to construct increasing subsets $X_\alpha\subset  \overline{u\pi_\infty^{-1}(y)}\cap U'$,
$\alpha<\Omega$ such that each non-empty finite subset $F$ of $X_\alpha$ is a transitive point of $R_{\pi_\infty}^{|F|}$, where $\Omega$ is the first
uncountable ordinal number.

Put $X_1=\{x_1'\}$. By Theorem \ref{thm-SY} and Corollary \ref{cor-perfect}, there is some
$x_2'\in \overline{u\pi_\infty^{-1}(y)}\cap U'$ such that $(x_1',x'_2)$ is a
transitive point of $R^2_{\pi_\infty}$. Let $X_2=\{x_1', x_2'\}$.

Now assume that $\alpha<\Omega$ is an ordinal number and $X_\beta$ has been constructed for any $\beta<\alpha$ such that
each non-empty finite subset $F$ of $X_\beta$ is a transitive point of $R_{\pi_\infty}^{|F|}$.
If $\alpha$ is a limit ordinal number then we put $X_\alpha=\cup_{\beta<\alpha}X_\beta$. It is clear that each finite non-empty subset $F$ of $X_\alpha$
is a transitive point of $R_{\pi_\infty}^{|F|}$.
Assume now $\alpha$ is not a limit ordinal number. For each finite non-empty subset $F$ of $X_{\alpha-1}$, by Theorem \ref{thm-SY}
and Corollary \ref{cor-perfect}, the set $Z_F$ of points $z\in \overline{u\pi_\infty^{-1}(y)}$ such that
$(F,z)$ is transitive in $R^{|F|+1}_{\pi_\infty}$ is residual in $\overline{u\pi_\infty^{-1}(y)}$. Hence
$Z=\cap_{F} Z_F$ is also residual in $\overline{u\pi_\infty^{-1}(y)}$, where $F$ runs over all finite non-empty subsets of $X_{\alpha-1}$.
Choose $x_\alpha'\in (Z\setminus X_{\alpha-1})\cap U'$ and put $X_\alpha=X_{\alpha-1}\cup \{x_\alpha'\}$.
It is clear that $X_\alpha\subset  \overline{u\pi_\infty^{-1}(y)}\cap U'$ and each finite non-empty subset $F$ of
$X_\alpha$ is a transitive point of $R_{\pi_\infty}^{|F|}$.


\medskip

Let $S=\cup_{\a<\Omega} \theta (X_{\a})$. Then $x=\theta(x_1')\in S$ and $S\subseteq U$. First we show that all points in $S$ are distinct, and hence $S$ is uncountable.
Assume the contrary that not all points in $S$ are distinct. This means that there are some $\a<\b<\Omega$ such that
$\theta (x_\a')=\theta(x_\b').$
In particular,
\begin{equation*}
  (x_\a',x_\b')\in R_{\theta}.
\end{equation*}
As $(x_\a',x_\b')$ is a transitive point of $(R_{\pi_\infty}, T\times T)$, and it follows that
$$R_{\pi_\infty}= \overline{Orb((x_\a',x_\b'),T\times T)}\subseteq R_\theta.$$ Hence $X$ is a factor of $Y_\infty$,
and $X$ is a PI flow by Proposition \ref{PI-factor}, a contradiction.

\medskip

Now we show that $S$ is proximal and recurrent. Let $n\in \N$ and $\a_1<\a_2<\ldots < \a_n<\Omega$.
Let ${\bf x'}=(x'_{\a_1},x'_{\a_2},\ldots, x'_{\a_n})$ and ${\bf x}=(x_{\a_1},x_{\a_2},\ldots, x_{\a_n})$, where $x_{\a_i}=\theta(x_{\a_i}'), 1\le i\le n$.
Since the diagonal of ${X_\infty}^n$ is contained in $R^n_{\pi_\infty}$ and ${\bf x'}$ is the transitive point of
$R^n_{\pi_{\infty}}$, there are a sequence $\{n_i\}\subseteq \Z$ and some point $z'\in X_\infty$
such that $(T^{n_i}x_{\a_1}',\ldots, T^{n_i}x_{\a_n}')\to (z',\ldots,z')$ as $n_i\to \infty$.
Since $\theta$ is continuous,
$$(T^{n_i}x_{\a_1},\ldots, T^{n_i}x_{\a_n})=(\theta(T^{n_i}x'_{\a_1}),\ldots,
\theta(T^{n_i}x'_{\a_n}))\to (\theta(z'),\ldots, \theta(z')),\ n_i\to\infty.$$
Thus $\{x_{\a_1},x_{\a_2},\ldots, x_{\a_n} \}$ is uniformly proximal.

Since ${\bf x'}$ is the transitive point of
$R^n_{\pi_{\infty}}$, ${\bf x'}$  is recurrent in the product system $(X^n_{\infty}, T^{(n)})$ and hence its $\theta\times \ldots,\times\theta$-image ${\bf x}$ is also recurrent. So $S$ is recurrent.

\medskip

It is left to show that for all $\a<\b<\Omega$, $(x_\a,x_\b)=(\theta(x_\a'), \theta(x_\b'))$ is LYS$_\ep$.
By the definition of $d$, there is some pair $(p,q)\in R_{\pi_\infty}$ such that $\rho(\theta( p),\theta( q))>\frac 12 d=\ep$. Since $(x'_{\a},x'_{\b})$ is the transitive point of
$R_{\pi_{\infty}}$, there is a sequence $\{m_i\}\subseteq \Z$ such that $(T^{m_i}x_\a', T^{m_i}x_\b')\to (p,q)$ as $m_i\to \infty$. As $\theta$ is continuous,
$$\lim_i\rho(T^{m_i}x_\a, T^{m_i}x_\b)=\lim_i\rho(\theta(T^{m_i}x'_\a), \theta(T^{m_i}x'_\b))= \rho(\theta(p),\theta(q))>\ep.$$
Hence $(x_\a,x_\b)$ is LYS$_\ep$.
The proof is completed.
\end{proof}

\begin{cor}\label{cor-AK}
Any minimal system with nontrivial weakly mixing factor is Li-Yorke sensitive.
\end{cor}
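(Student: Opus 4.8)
The plan is to reduce this corollary to the Main Theorem by showing that a minimal system with a nontrivial weakly mixing factor cannot be a PI system. Once that reduction is in place, the Main Theorem instantly supplies Li-Yorke sensitivity (and, in fact, the stronger conclusions listed there as well).

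First I would fix the data and set up an argument by contradiction. Let $(X,T)$ be a minimal system admitting a nontrivial weakly mixing factor, realized by a homomorphism $\pi:(X,T)\to (Z,T)$ with $(Z,T)$ weakly mixing and nontrivial. Since $(X,T)$ is minimal and $\pi$ is a surjective homomorphism, $(Z,T)$ is automatically minimal; this is the small point worth checking carefully, because the structural results I intend to invoke are stated for minimal (and PI) flows, so I want $(Z,T)$ to live in that category. With this noted, suppose toward a contradiction that $(X,T)$ is PI.

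Next I would chain together the two structural facts already available. By Proposition \ref{PI-factor}, a factor of a PI flow is again PI, so from the assumption that $(X,T)$ is PI it follows that its factor $(Z,T)$ is PI. But $(Z,T)$ is weakly mixing, and by Remark \ref{rem-wm} a minimal system that is simultaneously PI and weakly mixing must be trivial. This contradicts the nontriviality of $(Z,T)$. Hence $(X,T)$ is not PI. Applying the Main Theorem to the non-PI minimal system $(X,T)$ then produces an $\ep>0$ witnessing Li-Yorke sensitivity, which completes the argument.

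In truth there is no serious obstacle in this corollary: it is a direct consequence once one recognizes which ingredients to combine, namely the permanence of the PI property under factors and the collapse of the PI-and-weakly-mixing case to the trivial system. The only thing I would be mildly careful about is the minimality of the factor, as flagged above, since without it the cited results would not formally apply.
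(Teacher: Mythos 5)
Your proof is correct and is essentially identical to the paper's: both argue by contradiction, using Proposition \ref{PI-factor} to transfer the PI property to the weakly mixing factor and Remark \ref{rem-wm} to force that factor to be trivial, then invoke the Main Theorem. Your extra remark that the factor is automatically minimal is a sound (and implicitly used) detail the paper leaves unstated.
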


\begin{proof}
Let $(X,T)$ be a  minimal system with nontrivial weakly mixing factor $(Y,T)$.
By the Main Theorem, it remains to show that $(X,T)$ is not PI.
If not, then as factor of $(X,T)$, $(Y,T)$ is PI by Proposition \ref{PI-factor}. Hence $(Y,T)$ is both weakly mixing and PI, which means that $(Y,T)$ is trivial (Remark \ref{rem-wm}). A contradiction!
\end{proof}


\begin{rem}\label{rem-invertible}
In this paper, we consider $T: X\rightarrow X$ as a homeomorphism for simplicity. When $T$ is a continuous surjective map, one can use the natural extension to get the corresponding results. Recall that for a system $(X,T)$, let $(\widetilde{X},\widetilde{T})$
be the {\em natural extension} of $(X,T)$, i.e.
$\widetilde{X}=\{(x_1,x_2,\ldots)\in \prod_{i=1}^\infty X:
T(x_{i+1})=x_i,\ i\in\N\}$ (as the subspace of the product space)
and $\widetilde{T}(x_1,x_2,\ldots)=(T(x_1), x_1,x_2,\ldots)$. It is clear
that $\widetilde{T}:\widetilde{X}\rightarrow \widetilde{X}$ is a homeomorphism and $p_1: \widetilde{X}\rightarrow X$ is an asymptotic extension, where
$p_1$ is the projection to the first coordinate. When $(X,T)$ is minimal, $(\widetilde{X},\widetilde{T})$ is minimal and $p_1$ is almost one-to-one \cite[Corollary 5.18]{AGHSY}.

\end{rem}

\section{A question}

Finally we state a question related to Question \ref{Q2}.

\begin{ques}\label{Q3}
Is a non point-distal minimal system Li-Yorke chaotic?
\end{ques}

Since each LYS minimal system is not point-distal, an affirmative answer to Question \ref{Q3} will also give an affirmative answer to Question \ref{Q2} for the minimal case.

\medskip
\noindent{\bf Acknowledgments:} We would like to thank Dr. Jian Li for very useful suggestions. 


\end{document}